\newtheorem{theorem}{Theorem}[section]
\newtheorem{proposition}[theorem]{Proposition}
\newtheorem{remark}[theorem]{Remark}
\numberwithin{equation}{section}
\def\R{\mathbb{R}}
\def\e{{\varepsilon}}
\def\di12{\mathcal{D}^{1,2}(\R^n)}
\def\l{{\lambda}}
\def\0l{_{0,\l}}
\def\1l{_{1,\l}}
\def\2l{_{2,\l}}
\def\3l{_{3,\l}}
\def\4l{_{4,\l}}
\def\Om{\Omega}
\def\beq{\begin{equation}}
\def\eeq{\end{equation}}
\def\sideremark#1{\ifvmode\leavevmode\fi\vadjust{\vbox to0pt{\vss% the remark
 \hbox to 0pt{\hskip\hsize\hskip1em%                          will appear only
 \vbox{\hsize2.1cm\tiny\raggedright\pretolerance10000%          on the side
  \noindent #1\hfill}\hss}\vbox to15pt{\vfil}\vss}}}%
\newtheorem*{theorem*}{Theorem}
\begin{document}
%\title{Low regularity results for the Poisson problem related to a degenerate operator {\Mx Low regularity results for Poisson problems in a degenerate setting }}
\title{ { Low regularity results for degenerate Poisson problems }}
\author[Calanchi]{Marta Calanchi} \address{Marta Calanchi, Dipartimento di Matematica, Universit\`a degli Studi   di Milano - Via  Saldini 50, 20133 Milano, Italy e-mail: {\sf marta.calanchi@unimi.it}.}
\author[Grossi]{Massimo Grossi }
\address{Massimo Grossi,  Dipartimento di Scienze di Base Applicate per l'Ingegneria, Universit\`a degli Studi di Roma \emph{La Sapienza} - Via Scarpa 10, 00161, Roma, e-mail: {\sf massimo.grossi@uniroma1.it}.}
\date{29 May 2025. \ \  Research project partially supported by INdAM-GNAMPA}
\begin{abstract}
In this paper we study the Poisson problem,
\[
\begin{cases}
-{\rm div}(d^\beta\nabla u)=f&{\rm in}\ \Om\\
u=0&{\rm on}\ \partial\Omega,
\end{cases}
\]
where  $\Om\subset\R^N$, $N\ge2$ is a smooth bounded domain, $f$ is a continuous function, $\beta< 1$, and $d(x)=dist(x,\partial\Omega )$. We  describe the behaviour of $u$ near $\partial\Om$ and discuss some of its regularity properties.

\end{abstract}

\maketitle
\section{Introduction}

\medskip

The study of partial differential equations  plays a fundamental role in mathematical analysis, with applications ranging from physics to engineering, and has profound implications in understanding the regularity of solutions in various settings. 

\medskip
In this work, we focus on a particular class of Poisson-type problems that involve  degenerate or singular elliptic operators. In many cases, these problems exhibit degeneracy at the boundary, where the behaviour of the solution becomes less regular or singular. 

\medskip
We consider the weighted   operator, denoted by $L_\beta$,  defined as
\begin{equation}\label{b1a}
L_\beta(u):=-{\rm div}(d^\beta\nabla u).
\end{equation}

where the weights $w(x)=d^\beta(x)$ with $\beta<1$ are powers of the distance function 
$$d(x)=dist(x,\partial\Omega ),$$ 
and 
 $\Omega\subset\mathbb R^N$ is a bounded domain with $\partial\Omega\in C^2$.

\medskip
We are interested principally in   the regularity but mainly in the precise behaviour at the boundary of {\it  classical } solutions of the following Poisson-type problem

\begin{equation}\label{b8}
\begin{cases}
L_\beta(u)=f&{\rm in}\ \Om\\
u=0&{\rm on}\ \partial\Omega,
\end{cases}
\end{equation}

\medskip

where $f$ is a given function that acts as a source term within the domain $\Omega$. 

Although the operator is degenerate, if $f\in C^{0,\alpha}(\Omega)$ for some $\alpha\in(0,1)$, the classical local Schauder estimates give at least  that $u\in C^2(\Omega)$, since $L_\beta$ is uniformly elliptic in every  compact set $\Omega'\subset \Omega$. 
  
    \medskip
The weight introduces a degeneracy near the boundary when $\beta>0$, or a singularity when $\beta<0$ into the operator, which affects the behaviour of the solutions, that in general become less regular. % close to $\partial \Omega$.

  \medskip

  We aim to refine the understanding of the regularity of solutions to \eqref{b8},  especially near the boundary of $\Omega$.

\medskip

 Since the operator is in divergence form, it is natural to also consider the problem  from a variational point of view, addressing both the existence and the  possible regularity of the {\it weak solutions} of (\ref{b8}). To this aim  it is helpful to introduce the natural (weighted) Sobolev space in which to frame the problem. We briefly recall these notions for a general weight $w\ge0$.

\medskip

  \medskip
For   $w\in L_{loc}^1(\Omega)$, $w(x)\ge 0 \ a.e.$, we denote with
$L^p(\Omega,
w )$ the weighted Lebesgue space of measurable functions $u$ on $\Omega$ with  finite norm
\begin{equation}
||u||_{L^p(\Omega,w)}=\left(\int_\Omega |u(x)|^pw(x)\ dx\right)^\frac1p
\end{equation}
and with $W^{k,p}(\Omega,w)$ the weighted Sobolev space of all measurable functions $u$ whose 
distributional derivatives belongs to $L^p(\Omega,w)$, i.e. for which the norm
\begin{equation}\label{W}
||u||_{W^{k,p}(\Omega,w)}=\left(\sum_{|\alpha|\le k}||D^\alpha u||^p_{L^p(\Omega,w)}\right)^\frac1p
\end{equation}
is finite.

\medskip

\medskip
It is well-established (see e.g \cite{ko}) that if $w^{-\frac 1p}\in L^{p'}_{loc}(\Omega)$, then   $W^{k,p}(\Omega,w)$ are Banach spaces, and  $C_0^\infty(\Omega)\subset W^{k,p}(\Omega,w)$. 
Therefore, one can  also define the space 
$$
 W_0^{k,p}(\Omega,w)=\overline{C_0^\infty(\Omega)}^{||\cdot||_{W^{k,p}(\Omega,w)}}
$$
as  the closure of $C^\infty_0(\Omega)$ with respect to the norm defined in (\ref{W}).
%%%%%%

\medskip The literature offers several important results in this setting.
 An interesting reference for weighted Sobolev spaces and related inequalities   is the work by Edmunds and Opic, (\cite{eo}, see also the  work of Kufner and Opic \cite{ko}),
which 
 focuses on the weighted Poincar\'e inequality 
  treating in details 
 the special case of weights $w$  which are powers of the distance function $d(x).$  
 
 \medskip
{In this setting, 
we say that $u\in W^{1,2}_0(\Omega,w)$ is a weak solution to (\ref{b8}), if 
\begin{equation}\label{weak}
\int_\Omega \nabla u\nabla \phi \ w dx=\int_\Omega  f\phi \ dx,\quad\forall \phi\in C^\infty_0(\Omega).
\end{equation}

\medskip

%It is standard to prove that, if $f/d^\beta\in L^2(\Omega,d^\beta)$,  then the problem \eqref{b8} admits a weak solution $u\in W^{1,2}_0(\Om,d^\beta)$ (see Section 2 for more details).

\medskip

}

The present  work is partially inspired by the celebrated  paper of 
 Fabes, Kenig and Serapioni  (\cite{fks}), 
which investigates the local regularity properties of weak solutions to singular/degenerate elliptic partial differential equations.

\medskip

%{
%We say that $u\in W^{1,2}_0(\Omega,w)$ is a weak solution to (\ref{b8}), if 
%\begin{equation}\label{weak}
%\int_\Omega \nabla u\nabla \phi \ w dx=\int_\Omega  f\phi \ dx,\quad\forall \phi\in C^\infty_0(\Omega).
%\end{equation}
%
%\medskip
%
%%It is standard to prove that, if $f/d^\beta\in L^2(\Omega,d^\beta)$,  then the problem \eqref{b8} admits a weak solution $u\in W^{1,2}_0(\Om,d^\beta)$ (see Section 2 for more details).
%
%\medskip
%
%}

  Specifically, the cited paper focuses on the behaviour of weak solutions in regions where the ellipticity condition of the equation may fail or degenerate. The authors develop and apply methods to demonstrate that, under certain conditions,  solutions to these equations possess local and global regularity. 
  
  \medskip

  Their results involve nonnegative weights that belong to the Muckenhoupt classes $A_p$ (see e.g. \cite{Mk}), i.e.  weights $w$ defined as follows: for  $ \ p>1 $,   $w\in A_p$ if
  $$
\sup_{B\subset\Omega}\left(\frac1{|B|}{\int_Bw\  dx }\right)\left(\frac1{|B|}{\int_Bw^{-\frac{1}{p-1}}}dx \right)^{p-1}<+\infty.
  $$
 (For example the  weight $w=d^\beta\in A_2$ when  $\beta\in (-1,1)$, see Theorem 3.1 in \cite{dst}).

 \medskip
An important result in this direction is the following, which we state it in the case $w=d^\beta$, although it holds for more general weights as well:

\begin{theorem}[Theorem 2.4.8 in \cite{fks}] Let  $u\in W^{1,2}_0(\Omega, d^\beta)$ be a weak solution of 
$$
L_\beta(u)=-div \overrightarrow{F},
$$
where $\overrightarrow{F}:\Omega\to\mathbb R^N$ is a vector field such that $|\overrightarrow{F}|/d^\beta\in L^p(\Omega,d^\beta)$  with $\beta\in(-1,1)$ and $p>2n-\varepsilon$, for some $\epsilon>0$. Then, $u$ is H\"older continuous in $\overline\Omega$. 
\end{theorem}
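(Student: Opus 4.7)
The plan is to adapt the De Giorgi--Nash--Moser machinery to the weighted setting, exploiting that $w=d^\beta\in A_2$ whenever $\beta\in(-1,1)$. All the classical ingredients (Caccioppoli inequality, Sobolev--Poincar\'e inequality, Moser iteration, oscillation decay) admit weighted counterparts once the weight is Muckenhoupt, and the size assumption on $\vec{F}$ serves, via Morrey-type estimates, to absorb the forcing term into the iteration with a controlled tail.

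First, I would establish a weighted Caccioppoli inequality. Testing the weak formulation against $\phi=\eta^2(u-k)_+$ for a cutoff $\eta$ and a level $k$, integrating by parts, and absorbing the gradient term via Young's inequality gives
\begin{equation*}
\int_\Om |\nabla(u-k)_+|^2\eta^2 d^\beta\,dx \le C\int_\Om (u-k)_+^2|\nabla\eta|^2 d^\beta\,dx + C\int_{\{u>k\}\cap\mathrm{supp}\,\eta}\left(\frac{|\vec{F}|}{d^\beta}\right)^2 d^\beta\,dx,
\end{equation*}
and analogously for $(u-k)_-$. Next I would invoke the weighted Sobolev--Poincar\'e inequality available for $A_2$ weights: on every ball $B\subset\Om$ there exists $\kappa>1$ (depending only on $N$ and the $A_2$ constant of $w$) such that
\begin{equation*}
\left(\frac{1}{w(B)}\int_B |v|^{2\kappa}w\,dx\right)^{\frac{1}{2\kappa}}\le C\,r(B)\left(\frac{1}{w(B)}\int_B|\nabla v|^2 w\,dx\right)^{\frac{1}{2}}.
\end{equation*}
Coupling this with the Caccioppoli estimate and iterating \`a la Moser yields local boundedness. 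The condition $p>2N-\varepsilon$ is precisely what makes the $\vec{F}$-contribution a lower-order perturbation at each iteration step, so that the standard oscillation-decay lemma of De Giorgi applies and gives interior H\"older continuity with exponent $\alpha=\alpha(N,\beta,p)>0$.

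For continuity up to $\partial\Om$, I would use the homogeneous boundary condition: fix $x_0\in\partial\Om$, locally flatten $\partial\Om$ via a $C^2$ diffeomorphism (which changes the weight only by a bounded factor), and extend $u$ by zero across $\partial\Om$. Since $\beta<1$, the zero extension still belongs to $W^{1,2}(\cdot,d^\beta)$, and on half-balls $B_r(x_0)\cap\Om$ one has a Poincar\'e inequality with no need to subtract an average. Running the same De Giorgi scheme, now starting from the fact that $u$ vanishes on a set of positive weighted measure in every such half-ball, produces the required oscillation decay up to the boundary.

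The main obstacle is the boundary step. Interior H\"older regularity for $A_2$-degenerate operators is by now standard, but at $\partial\Om$ the weight either collapses ($\beta>0$) or blows up ($\beta<0$), and the geometry of $\Om$ intervenes. The crucial check is that the weighted Poincar\'e inequality on $B_r(x_0)\cap\Om$ scales correctly in $r$ and that the $A_2$ constant remains uniform as $x_0$ ranges over $\partial\Om$; this is where the $C^2$ assumption on $\partial\Om$ enters, through a local straightening reducing the weight to a power of the distance from a hyperplane, for which the $A_2$ property and the Poincar\'e inequality are explicit.
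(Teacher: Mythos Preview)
There is nothing to compare here: the paper does not prove this statement. It is quoted verbatim as Theorem~2.4.8 of \cite{fks} and used only as background motivation in the introduction; no proof or sketch is given in the paper. Your outline (weighted Caccioppoli, $A_2$-Sobolev--Poincar\'e, Moser/De~Giorgi iteration, boundary flattening) is indeed the strategy of \cite{fks} itself, so in that sense your proposal matches the original source rather than the present paper.
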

%
%{\M
%From now on, we will assume that $w = d^\beta$, with $\beta < 1$.  As in the result above, many works have been set in Muckenoupt class (see also \cite{fjk1,fkj2}); in this paper, the more general case $\beta\le -1$ will also be considered.}

{ From now on, we will assume that $w = d^\beta$, with $\beta < 1$. As in the result above, many works have been carried out within the framework of the Muckenhoupt class (see also \cite{fjk1,fkj2}); we also consider the case $\beta \le -1$.}

%We say that $u\in W^{1,2}_0(\Omega,d^\beta)$ is a weak solution to (\ref{b8}), if 
%\begin{equation}\label{weak}
%\int_\Omega \nabla u\nabla \phi \ d^\beta dx=\int_\Omega  f\phi \ dx,\quad\forall \phi\in C^\infty_0(\Omega).
%\end{equation}
%
%\medskip
%
%It is standard to prove that, if $f/d^\beta\in L^2(\Omega,d^\beta)$,  then the problem \eqref{b8} admits a weak solution $u\in W^{1,2}_0(\Om,d^\beta)$ (see Section 2 for more details).
%\medskip

\medskip 
In this paper, we do not focus on identifying the optimal conditions on $f$
required to guarantee continuity or boundedness of the solution to \eqref{b8}. Rather, we take these regularity properties as given and use them as a basis to establish additional results. In particular, we derive explicit estimates that characterize the solution's asymptotic behaviour near the boundary.
%In this paper, we do not concern ourselves with the optimal assumptions on $f$ to ensure that the solution of \eqref{b8} is continuous or bounded. Instead, we simply assume this regularity and then derive further properties from it. We will derive explicit estimates that describe the asymptotic behaviour of the solution as it approaches the boundary. 
These estimates provide upper and lower bounds for the solution $u$, highlighting the maximal regularity of the solution in terms of the parameters of the problem, in particular of  the exponent $\beta$.

  \medskip
The main result  in this direction is stated in our main theorem, which follows.

\medskip

\begin{theorem}\label{L3}
Suppose that $u\in C^2(\Omega)\cap C(\overline\Om)$ solves
\eqref{b8} with
$f\ge0$, $f\not\equiv 0$ in $\Om$, $f\in C(\overline\Om)$ and $\beta<1$.
Then,   for any $\eta_1,\eta_2>0$, there exist a sufficiently small $\sigma>0$ and two positive constants $D_1$ and $D_2$ such that
\begin{equation}\label{estimate2}
D_1d^{1-\beta}(x)(-\log d(x))^{-\eta_1}\le u(x)\le D_2d^{1-\beta}(x)(-\log d(x))^{\eta_2}, \quad \forall x\in\Gamma_\sigma^\circ.
\end{equation}
where $$\Gamma_\sigma=\{x\in\overline\Omega: \ d(x)<\sigma\}.
$$

Moreover, if $\Om$ is convex,  then
\begin{equation}\label{estimate3}
D_1d^{1-\beta}(x)(-\log d(x))^{-\eta_1}\le u(x)\le D_2d^{1-\beta}(x), \quad \forall x\in\Gamma_\sigma.
\end{equation}
\end{theorem}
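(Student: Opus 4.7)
The plan is to prove both \eqref{estimate2} and \eqref{estimate3} by constructing explicit sub- and supersolutions of $L_\beta$ in the tubular neighborhood $\Gamma_\sigma$ and then invoking a comparison principle. Since $\partial\Omega\in C^2$, the distance function $d$ belongs to $C^2(\Gamma_\sigma)$ for all sufficiently small $\sigma>0$ and $\Delta d$ is bounded there. For any radial ansatz of the form $\phi(d)$, setting $\Psi(t):=t^\beta\phi'(t)$, I will use the convenient identity
\[
L_\beta(\phi(d)) \;=\; -\Psi'(d) - \Psi(d)\,\Delta d,
\]
which follows at once from $|\nabla d|=1$ and reduces every barrier computation to a one-variable check, with the sign of $\Delta d$ playing a decisive role.

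For the lower bound the natural candidate is $\underline u:=D_1\,d^{1-\beta}(-\log d)^{-\eta_1}$. A direct differentiation shows that $\Psi(t)\to 0$ while $\Psi'(t)\to+\infty$ as $t\to 0^+$ (the $1/t$ factor from $-\log$ dominates the vanishing logarithmic power), so $L_\beta(\underline u)\to-\infty$, and in particular $L_\beta(\underline u)\le 0\le f$ on $\Gamma_\sigma$ for $\sigma$ small enough. The strong maximum principle for $L_\beta$, which is uniformly elliptic on compact subsets of $\Omega$, gives $u>0$ in $\Omega$, so I can pick $D_1>0$ so small that $\underline u\le u$ on $\{d=\sigma\}$, while both functions vanish on $\partial\Omega$. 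For the general upper bound take $\overline u:=D_2\,d^{1-\beta}(-\log d)^{\eta_2}$: the symmetric computation gives $\Psi'(t)\to-\infty$, and since $\Psi(t)\Delta d$ grows only logarithmically one has $L_\beta(\overline u)\to+\infty$, so $L_\beta(\overline u)\ge\|f\|_\infty\ge f$ for small $d$; then $D_2$ is chosen large enough that $\overline u\ge u$ on $\{d=\sigma\}$. Comparison on $\Gamma_\sigma$ yields \eqref{estimate2}.

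When $\Omega$ is convex the level sets of $d$ are convex, so $\Delta d\le 0$ on $\Gamma_\sigma$, and the pure power $d^{1-\beta}$ already satisfies $L_\beta(d^{1-\beta})=-(1-\beta)\Delta d\ge 0$. This is the right sign, but the quantity may vanish (for example on flat portions of $\partial\Omega$), so by itself it cannot dominate $f$. The remedy I propose is the corrected upper barrier
\[
\overline u:=D_2\,d^{1-\beta}-C\,d^{2-\beta},
\]
for which the identity above gives $L_\beta(\overline u)=-D_2(1-\beta)\Delta d+C(2-\beta)(1+d\,\Delta d)$. After shrinking $\sigma$ so that $\sigma\,\|\Delta d\|_{L^\infty(\Gamma_\sigma)}\le 1/2$, the last parenthesis is $\ge 1/2$ and the first summand is $\ge 0$ by convexity, so $L_\beta(\overline u)\ge C(2-\beta)/2$. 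Choosing $C\ge 2\|f\|_\infty/(2-\beta)$ makes $\overline u$ a supersolution, choosing $D_2$ large makes $\overline u\ge u$ on $\{d=\sigma\}$, both sides vanish on $\partial\Omega$, and since $\overline u\le D_2\,d^{1-\beta}$ throughout $\Gamma_\sigma$, comparison yields \eqref{estimate3}.

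The hardest step is the justification of the comparison principle across the degenerate boundary $\{d=0\}$, where $L_\beta$ loses uniform ellipticity. I would apply the standard comparison on the non-degenerate annulus $\{\varepsilon<d<\sigma\}$ and pass to the limit $\varepsilon\to 0^+$, using continuity of $u$, $\underline u$ and $\overline u$ up to the boundary and the fact that all three vanish as $d\to 0^+$ (since $1-\beta>0$). A secondary subtlety is the choice of the corrective term in the convex case: $d^{1-\beta}$ alone fails exactly where $\Delta d$ vanishes, a logarithmic correction would recover only \eqref{estimate2}, and $-Cd^{2-\beta}$ is the minimal subprincipal term that generates a positive constant under $L_\beta$ while remaining of strictly smaller order than $d^{1-\beta}$ as $d\to 0^+$.
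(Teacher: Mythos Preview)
Your argument for \eqref{estimate2} is essentially identical to the paper's: both build the barriers $d^{1-\beta}(-\log d)^\eta$ with $\eta<0$ (subsolution) and $\eta>0$ (supersolution), compute $L_\beta$ of these functions, observe that the leading term $(1-\beta)\eta\,d^{-1}(-\log d)^{\eta-1}$ drives $L_\beta$ to $-\infty$ or $+\infty$, and conclude by the weak maximum principle on $\Gamma_\sigma$. Your limiting argument on $\{\varepsilon<d<\sigma\}$ is more cautious than necessary: since $u,\underline u,\overline u\in C^2(\Gamma_\sigma^\circ)\cap C(\overline{\Gamma_\sigma})$ and $L_\beta$ is elliptic (even if not uniformly) in $\Gamma_\sigma^\circ$, the classical weak maximum principle (Theorem~3.1 in Gilbarg--Trudinger, which the paper cites) applies directly.

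For \eqref{estimate3} your route differs from, and in fact strengthens, the paper's. The paper takes the bare barrier $v=d^{1-\beta}$, notes $L_\beta v=-(1-\beta)\Delta d\ge 0$ by convexity, and then simply writes ``the claim follows as in the proof of Step~3''. But Step~3 used $L_\beta v\ge M\ge\max f$, whereas here $L_\beta v$ is only $\ge 0$ and can vanish wherever $\partial\Omega$ is flat; if $f>0$ at such a point, no scaling of $u$ by a small constant repairs the inequality $L_\beta v\ge L_\beta(\varepsilon_2 u)$. Your corrected barrier $D_2\,d^{1-\beta}-C\,d^{2-\beta}$ resolves exactly this: the extra term contributes $C(2-\beta)(1+d\,\Delta d)$, which is uniformly positive near $\partial\Omega$, so $L_\beta\overline u\ge \|f\|_\infty$ regardless of whether $\Delta d$ vanishes. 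This is a genuine improvement over what is written in the paper; the paper's Step~5 is only complete as stated when $\Omega$ is \emph{strictly} convex (so that $-\Delta d$ is bounded away from zero on $\Gamma_\sigma$).
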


\medskip
The estimate (\ref{estimate3}) provides a precise control of the solution 
$u$ on the boundary $\partial\Omega$.

\smallskip In our opinion it is interesting in itself, and in any case, we will use it to deduce some properties of the solution.
A first immediate consequence concerns the maximal H\"older regularity that we may expect from the solution. 

As mentioned before, 
in  \cite{fks}, under some assumptions on $f$, it was proved that the solution  $u\in C^{0,\alpha}(\overline\Om)$, for some $\alpha\in (0,1)$.  We give an upper bound to the value $\alpha$: 
\begin{theorem}\label{h} 
Suppose that $u\in C^2(\Omega)\cap C^{0,\alpha}(\overline\Om)$ { solves}
\eqref{b8} with
$f\ge0$, $f\not\equiv 0$ in $\Om$, $f\in C(\overline\Om)$ and $\beta<1$. 
Then  $\alpha\le\min\{1,1-\beta\}.$
\end{theorem}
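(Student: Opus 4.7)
The plan is to deduce the theorem as a short consequence of Theorem \ref{L3}, by comparing the lower bound stated there with the upper bound dictated by H\"older continuity at the boundary. First I would observe that, since $u \equiv 0$ on $\partial\Om$ and $u\in C^{0,\alpha}(\overline\Om)$, for every $x\in\Om$ I may pick $x_0\in\partial\Om$ with $|x-x_0|=d(x)$ and estimate
\[
0\le u(x) = |u(x)-u(x_0)| \le [u]_{C^{0,\alpha}(\overline\Om)}\,d(x)^\alpha.
\]
The nonnegativity of $u$ on the left follows from $f\ge 0$, $f\not\equiv 0$, via the strong maximum principle applied to the uniformly elliptic operator $L_\beta$ on any compactly contained subdomain of $\Om$.

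Next I would combine this upper bound with the lower bound of Theorem \ref{L3}: fixing an arbitrary $\eta_1>0$, there exist $\sigma>0$ and $D_1>0$ such that
\[
D_1\,d(x)^{1-\beta}(-\log d(x))^{-\eta_1} \le u(x) \le C\,d(x)^\alpha \qquad \text{for all } x\in \Gamma_\sigma^\circ.
\]
Dividing and rearranging gives
\[
\frac{D_1}{C} \le d(x)^{\alpha-(1-\beta)}(-\log d(x))^{\eta_1} \qquad \text{as } d(x)\to 0^+.
\]

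Finally I would argue by contradiction. If $\alpha>1-\beta$, then the exponent $\alpha-(1-\beta)$ is strictly positive, so the right-hand side above tends to $0$ as $d(x)\to 0^+$ (any positive power of $d$ dominates any power of $-\log d$), contradicting the positive lower bound $D_1/C$. Hence $\alpha\le 1-\beta$. The inequality $\alpha\le 1$ is built into the definition of H\"older continuity (and, independently, $\alpha>1$ together with the boundary condition would force $u\equiv 0$, contradicting $f\not\equiv 0$). Combined, these give $\alpha\le \min\{1,1-\beta\}$. Since Theorem \ref{L3} already does the heavy lifting, there is essentially no obstacle here; the only step requiring a word of comment is precisely the absorption of the logarithmic correction $(-\log d(x))^{-\eta_1}$, which is harmless because $\eta_1$ may be taken arbitrarily small while the gap between the H\"older and the diffusion exponents is a strictly positive \emph{polynomial} gap whenever $\alpha>1-\beta$.
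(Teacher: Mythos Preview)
Your proof is correct and follows essentially the same route as the paper: both argue by contradiction, pairing the H\"older upper bound $u(x)\le Cd(x)^\alpha$ (from evaluating at a nearest boundary point) with the lower bound from Theorem~\ref{L3}, and then observing that $d^{\alpha-(1-\beta)}(-\log d)^{\eta_1}\to 0$ when $\alpha>1-\beta$. The paper simply remarks that the case $\beta\le 0$ is trivial (since then $\min\{1,1-\beta\}=1$), which you handle equivalently via the built-in constraint $\alpha\le 1$.
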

We point out that the result is sharp, as proved in Remark \ref{sharp}.

\medskip

Another  consequence  concerns the regularity of this solution within the framework of Sobolev spaces.  The  solution to equation \eqref{b8} minimizes the functional 
 \begin{equation*}
 F(u):=\frac12 \int_\Omega|\nabla u|^2 d^\beta\ dx- \int_\Omega f u \ dx
 \end{equation*}
 in the space $W^{1,2}_0(\Om,d^\beta)$ (see Section \ref{s2}).
 The next result investigates the inclusion of 
the solution in the classical Sobolev spaces.

\begin{theorem}\label{t}

Same assumptions as in Theorem \ref{L3}. Then $u$ belongs to $W^{1,2}_0(\Omega)$ if and only if $\beta<\frac12$.
\end{theorem}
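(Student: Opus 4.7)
The plan is to reduce the statement $u\in W^{1,2}_0(\Omega)$ to an integrability question on a boundary strip, since the interior contribution $\int_{\Omega\setminus\Gamma_\sigma}|\nabla u|^2\,dx$ is automatically finite by the $C^2$-regularity of $u$. In both directions the pivotal quantity will be the Hardy-type integral $\int_\Omega u^2/d^2\,dx$, whose finiteness, by the matching upper and lower bounds of Theorem~\ref{L3}, turns out to be equivalent to $\beta<1/2$.

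For the necessity direction I would invoke the classical Hardy inequality on the $C^2$ bounded domain $\Omega$: the assumption $u\in W^{1,2}_0(\Omega)$ yields $\int_\Omega u^2/d^2\,dx\le C\int_\Omega|\nabla u|^2\,dx<+\infty$. Inserting the lower bound in \eqref{estimate2} with a choice $\eta_1<1/2$ and working in tubular coordinates around $\partial\Omega$, finiteness reduces to that of $\int_0^\sigma s^{-2\beta}(-\log s)^{-2\eta_1}\,ds$. The substitution $t=-\log s$ shows this integral diverges whenever $\beta>1/2$ and, since $2\eta_1<1$, also at the borderline $\beta=1/2$, forcing $\beta<1/2$.

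For the sufficiency direction I would rewrite the equation in the tubular neighbourhood of $\partial\Omega$ (where $d\in C^2$) in non-divergence form, $-\Delta u=d^{-\beta}f+\beta d^{-1}\nabla d\cdot\nabla u$, and test against $u\chi^2\psi_\epsilon^2$, with $\chi\in C^\infty_c(\Gamma_{2\sigma})$ fixed, $\chi\equiv 1$ on $\Gamma_\sigma$, and $\psi_\epsilon$ a cut-off vanishing on $\Gamma_{\epsilon/2}$ and equal to $1$ outside $\Gamma_\epsilon$. Integration by parts produces $I_\epsilon:=\int\chi^2\psi_\epsilon^2|\nabla u|^2\,dx$ together with four error terms: the $\nabla\chi$-term is controlled by interior $C^2$-regularity; the $\nabla\psi_\epsilon$-term is bounded via Cauchy--Schwarz by $C\int u^2\chi^2|\nabla\psi_\epsilon|^2\,dx\lesssim\epsilon^{1-2\beta}(-\log\epsilon)^{2\eta_2}\to 0$, using the upper bound in \eqref{estimate2}; the source term reduces to $\int d^{1-2\beta}(-\log d)^{\eta_2}\,dx$, finite because $\beta<1$; and the drift term $\beta\int d^{-1}u\,\nabla u\cdot\nabla d\,\chi^2\psi_\epsilon^2\,dx$ is handled by Cauchy--Schwarz combined with a small-parameter Young inequality, reabsorbing a fraction of $I_\epsilon$ and leaving $C\beta^2\int u^2\chi^2/d^2\,dx$, which is finite precisely when $\beta<1/2$ (again by the upper bound). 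Passing $\epsilon\to 0$, monotone convergence gives $\int_\Omega|\nabla u|^2\,dx<+\infty$; since $u\in C(\overline\Omega)$ vanishes on $\partial\Omega$, a standard truncation-and-mollification upgrades $u\in W^{1,2}(\Omega)$ to $u\in W^{1,2}_0(\Omega)$.

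The main obstacle is the drift term, which has no definite sign: its absorption requires precisely the finiteness of $\int u^2/d^2\,dx$, the very quantity that powers the necessity direction. The matching behaviour of this Hardy-type integral under the two-sided estimate of Theorem~\ref{L3} is what makes the threshold $\beta=1/2$ sharp in both directions.
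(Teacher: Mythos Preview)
Your proposal is correct and, for the necessity direction, essentially identical to the paper's argument (Hardy inequality combined with the lower bound of Theorem~\ref{L3}, with $\eta_1$ chosen small to handle the borderline $\beta=1/2$).

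For sufficiency you take a genuinely different route. The paper tests the divergence-form equation against $u/d^\beta$ on the super-level sets $\Omega_c=\{u>c\}$: since $\Omega_c\subset\subset\Omega$, the resulting boundary term $-\int_{\partial\Omega_c}u\,\partial_\nu u$ has a sign and is discarded, at the price of invoking a generalized divergence theorem (from \cite{mpp}) because $\partial\Omega_c$ need not be regular; one then sends $c\downarrow 0$ by monotone convergence. Your approach instead passes to the non-divergence form on the tubular neighbourhood and tests against the compactly supported smooth function $u\chi^2\psi_\epsilon^2$, so the integration by parts is entirely elementary, but you must then control the extra cross-terms coming from $\nabla\chi$ and $\nabla\psi_\epsilon$ before letting $\epsilon\to 0$. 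Both routes land on the same core inequality $\int|\nabla u|^2\lesssim \beta^2\int u^2/d^2+\text{(finite)}$, and the threshold $\beta=1/2$ enters via the very same Hardy-type integral. Your version avoids the measure-theoretic divergence theorem; the paper's version avoids the bookkeeping of cut-off errors and works globally on $\Omega$. One small point to tighten when you write it out: the $\nabla\psi_\epsilon$-term also contains a factor of $|\nabla u|$, so you must first absorb a fraction of $I_\epsilon$ (via Young's inequality, exactly as you do for the drift term) before the remainder is dominated by $C\int u^2\chi^2|\nabla\psi_\epsilon|^2\lesssim \epsilon^{1-2\beta}(-\log\epsilon)^{2\eta_2}$.
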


In the degenerate case ($\beta>0$),  Theorem \ref{t}  shows a different regularity of the solution depending on whether $\beta \in (0, \frac12)$ or $\beta \in [\frac12, 1)$, despite the fact that in both cases the weight $d^\beta$ belongs to the Muckenhoupt class $A_2$. Therefore, mere belonging in the $A_2$ class is not sufficient to provide an accurate description of the regularity of the solution. The same remark applies to Theorem \ref{h}.

 \medskip
 
 Lastly, we note that our results extend to more general weights 
$
w$ that behave similarly 
near the boundary (see Remark \ref{r1}).

\medskip
{
Some other regularity results have been proven in \cite{stv1} and \cite{stv2}  for the weight $w(x_1,..,x_N)=|x_1|^\beta$ with $\beta\in\R$ and $x\in B_1$, the unit ball of $\R^N$.
In particular, in \cite{stv2} it is developed a more general and structured theory to address the regularity of odd solutions whereas our paper is concerned with providing more detailed results in the case where the weight $w$ is the distance from the boundary. Another difference is methodological: in \cite{stv2}, non-degenerate approximating problems are considered, whereas in our case we deal directly with the operator.

\medskip
We also mention the papers \cite{fjk1,fkj2} for other properties of the solution including the discussion of regular point of $\partial\Omega$.}

\medskip

The paper is organized as follows:  in Section \ref{s2} we provide some additional properties of the weight  $d^\beta$ and recall other known results;  in Section \ref{s3} we give the proof of  the main estimates of Theorem \ref{L3},  and in Section \ref{s4} we prove Theorems \ref{h} and \ref{t}.
%\newpage

\bigskip

{\it Acknowledgements.} The authors wish to express the gratitude to Sergio Polidoro for the valuable suggestion that significantly improve the formulation of Theorem \ref{t}.

\section{Notations and preliminary known results}\label{s2}

\medskip

% Many works have been set in this class (see for example \cite{fjk1,fkj2}); in this paper, the more general case $\beta< -1$ will also be considered.
%

\medskip 
Before presenting the main result, we would like to highlight some key properties of $d$.

\medskip
We denote by
\begin{equation}\label{a1}
\Gamma_\sigma =\{x\in\overline\Omega: \ d(x)<\sigma\}
\end{equation}
the portion in $\overline\Omega$ of a tubular neighbourhood of $\partial\Omega$. With an abuse of terminology, 
from now on we will call $\Gamma_\sigma$ a neighbourhood of $\partial\Omega$.

\begin{proposition}
Let $\Omega \subset\mathbb R^N$ a bounded domain with $\partial \Omega\in C^2$. Then  there exists a small constant $\sigma>0$ such that
\medskip
\begin{equation}\label{d1}
d\in C^2(\Gamma_\sigma\cap\Omega)\cap C^0(\overline\Gamma_\sigma),
\end{equation}
\begin{equation}\label{d2}
|\nabla d(x)|=1\hbox{ for all }x\in\Gamma_\sigma,
\end{equation}
Moreover, 
for every measurable nonnegative function $g: (0,\sigma)\to \mathbb R$
\begin{equation}\label{b11}
g\circ d \in L^1({\Gamma_\sigma})\Longleftrightarrow g\in L^1(0,\sigma).
\end{equation}

\end{proposition}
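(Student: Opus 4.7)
The plan is to exploit the classical tubular neighbourhood construction around the $C^2$ boundary $\partial\Omega$. Consider the normal map $\Phi(y,t)=y-t\nu(y)$ defined on $\partial\Omega\times(-\sigma_0,\sigma_0)$, where $\nu(y)$ denotes the outward unit normal. Since $\partial\Omega\in C^2$, the Gauss map $\nu$ is of class $C^1$, and the differential $D\Phi(y,0)$ acts as the identity on the tangent space to $\partial\Omega$ at $y$ and sends $\partial_t$ to $-\nu(y)$, hence is invertible. By compactness of $\partial\Omega$ and the inverse function theorem there exists $\sigma>0$ such that $\Phi$ is a diffeomorphism onto an open tubular neighbourhood of $\partial\Omega$ containing $\Gamma_\sigma$. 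Consequently every $x\in\Gamma_\sigma\cap\Omega$ has a unique nearest boundary point $p(x)$ and satisfies $x=p(x)-d(x)\nu(p(x))$, so that $d$ coincides with the appropriate coordinate of $\Phi^{-1}$. This gives \eqref{d1} (with the additional observation, classical e.g.\ in Gilbarg--Trudinger, Appendix 14.6, that $d$ inherits the full $C^2$ smoothness rather than the $C^1$ regularity one might read off naively from $\Phi^{-1}$). Differentiating the identity above and using that tangent vectors to $\partial\Omega$ are orthogonal to $\nu$ yields $\nabla d(x)=-\nu(p(x))$, proving \eqref{d2}.

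For \eqref{b11} I would use the tubular change of variables $x=\Phi(y,t)$ in the integral $\int_{\Gamma_\sigma}(g\circ d)(x)\,dx$. The Jacobian $J(y,t):=|\det D\Phi(y,t)|$ is continuous on $\partial\Omega\times[0,\sigma]$ with $J(y,0)=1$; shrinking $\sigma$ if necessary, there exist constants $0<C_1\le C_2$ with $C_1\le J(y,t)\le C_2$ throughout. Applying the change of variables together with Tonelli's theorem yields
\[
\int_{\Gamma_\sigma} g(d(x))\,dx=\int_0^\sigma g(t)\Bigl(\int_{\partial\Omega} J(y,t)\,d\mathcal{H}^{N-1}(y)\Bigr)\,dt,
\]
and the inner factor lies in $[C_1|\partial\Omega|,\,C_2|\partial\Omega|]$. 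Since $g\ge 0$, the equivalence $g\circ d\in L^1(\Gamma_\sigma)\Longleftrightarrow g\in L^1(0,\sigma)$ is then immediate. Alternatively, the same conclusion follows from the coarea formula combined with \eqref{d2} and the boundedness of $\mathcal{H}^{N-1}(\{d=t\})$ for small $t$.

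The principal (and essentially only) obstacle is purely technical: ensuring that a single $\sigma>0$ can be chosen for which $\Phi$ is a diffeomorphism, every $x\in\Gamma_\sigma$ has a unique nearest point on $\partial\Omega$, and $J$ stays uniformly bounded away from $0$ and $\infty$. All three conditions follow from the compactness of $\partial\Omega$ together with the uniform bound on principal curvatures afforded by the $C^2$ regularity, and they can be arranged simultaneously; once $\sigma$ is so fixed, the remaining arguments are routine.
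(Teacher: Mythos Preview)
Your proposal is correct and follows essentially the same route as the paper. The paper simply cites Gilbarg--Trudinger, Appendix~14.6 for \eqref{d1}--\eqref{d2} (which you sketch explicitly), and for \eqref{b11} it uses the coarea formula together with two-sided bounds on $\mathcal{H}^{N-1}(\{d=t\})$---precisely the alternative you mention at the end; your primary tubular change-of-variables argument is an equivalent variant yielding the same factorisation $\int_0^\sigma g(t)\cdot(\text{uniformly bounded quantity})\,dt$.
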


\medskip

\begin{proof} For (\ref{d1}) and  (\ref{d2}) see e.g. \cite{gt} Appendix 14.6.

\medskip
Here's a brief outline of how to prove \eqref{b11}:
from the coarea formula and  (\ref{d1}) and  (\ref{d2}),  we have, since $|\nabla d(x)|=1$

\begin{equation*}
\int_{\Gamma_\sigma}g(d(x))\ dx 
%=\int_{\Gamma_\sigma}g(d(x))\ |\nabla d(x)| dx 
=\int_0^\sigma  g(t)\mathcal H^{N-1}(\Gamma_\sigma\cap\{d=t\})\ dt,
\end{equation*}
where $ \mathcal H^{N-1}$ is the Hausdorff measure of $\Gamma_\sigma\cap\{d=t\}$. Since the $\partial \Omega$ is $C^2$, there exist two positive constants $c_1$ and $c_2$ such that 
$$
c_1\mathcal H^{N-1}(\partial \Omega)\le \mathcal H^{N-1}(\Gamma_\sigma\cap\{d=t\})\le c_1\mathcal H^{N-1}(\partial \Omega)
$$
(see e.g. \cite{sl} , Appendix  2.12.3). This ends the proof.
\end{proof}

Let us end this section with some remarks on the weak solution to \eqref{b8}.
If $f/d^\beta\in L^2(\Omega,d^\beta)$, 
then \eqref{b8} admits a weak solution $u\in W^{1,2}_0(\Om,d^\beta)$. This is an immediate consequence of weighted Poincar\'e inequality

\begin{equation}\label{poincare}
\int_\Omega |u(x)|^2d^\beta(x)\ dx\le C\int_\Omega|\nabla u|^2 d^\beta(x)\ dx,
\end{equation}
and the compact embedding of $W^{1,2}_0(\Om,d^\beta)$ in $L^2(\Omega,d^\beta)$
 (see e.g \cite{eo}, Prop. 5.1 and Ex. 5.2).
 
% {\color{red}and ?? \footnote{Pointwise estimates for a class of strongly degenerate elliptic operators : a geometrical approach
%Franchi, Serapioni}
% Theorem 4.6}). 
 Indeed, the functional 
  $F:W^{1,2}_0(\Om,d^\beta)\to \mathbb R$ 
 \begin{equation}\label{functional}
 F(u):=\frac12 \int_\Omega|\nabla u|^2 d^\beta\ dx- \int_\Omega f u \ dx
 \end{equation}
 is well defined, coercive and bounded from below. So that $J$  attains its infimum on $W^{1,2}_0(\Omega,d^\beta)$ (see also  Theorem 2.9 in \cite{cav2}). 
 \medskip

\vspace{0.5cm}

%\newpage
\section{The main estimate: proof of Theorem \ref{L3}}\label{s3}

\medskip
In this section, we will prove upper and lower bounds for the solutions of the problem (\ref{b8}) where the datum
$f$ is a non-negative continuous function.

\medskip

Let us give the proof of Theorem \ref{L3},

\medskip
\begin{proof}[Proof of Theorem \ref{L3}] We begin by noting  that, since $L_\beta$ is elliptic in $\Om$, the weak maximum principle applies (see \cite{gt}). Therefore,  $f\ge0$ implies that $u\ge0$. Furthermore, since $L_\beta$ is uniformly elliptic in every $\Om'\subset\subset\Om$, 
 it follows that $u>0$ in $\Omega$.
Now, we split the proof into several steps.

\medskip
Let, for $\eta\in\R$ and $x\in \Gamma_\sigma$,
%\begin{equation}
%v(x)=d^{1-\beta}(x)\left(-\log d(x)\right)^\eta.
%\end{equation}

\begin{equation}\label{v}
v(x)=\begin{cases}
d^{1-\beta}(x)\left(-\log d(x)\right)^\eta\quad &x \in\ \Gamma_\sigma\setminus \partial\Om\\
%u>0&in\ \Omega.\\
0&x\in\ \partial\Omega,
\end{cases}
\end{equation}

{\bf Step 1:} {we have
\begin{equation}\label{Lv}
  \begin{split}
L_\beta( v)&=-\Delta d\left[(1-\beta)\left(-\log d\right)^\eta-\eta\left(-\log d\right)^{\eta-1}\right]\\
&+d^{-1}\left[
(1-\beta)\eta\left(-\log d\right)^{\eta-1}
-\eta(\eta-1)\left(-\log d\right)^{\eta-2}\right].
\end{split} 
\end{equation}
}
This is a straightforward computation where we used that $|\nabla d|=1$.

\medskip
{\bf Step 2:} If $\eta<0$ we have that here exists $\sigma>0$  and $\epsilon_1>0$ such that 
\begin{equation}\label{u>v}
u(x)\ge \epsilon_1 d^{1-\beta}(x)\left(-\log d(x)\right)^\eta, \quad\forall x\in  \Gamma_\sigma.
\end{equation}

\medskip
Indeed, from (\ref{Lv}) we have that,  (recall that  $\Delta d$  is bounded  in a neighbourhood  $\Gamma_\sigma$ of  $\partial\Omega$)
$$
L_\beta(v)=\frac{(1-\beta)\eta}{d\left(-\log d\right)^{1-\eta}}+o\left(\frac{1}{d\left(-\log d\right)^{1-\eta}}\right)\quad {\rm as } \ x\to \partial \Omega.
$$
Since $L_\beta(u)=f\ge0$ and $\eta<0$,  there exists  a  (possibly smaller) $\sigma>0$ such that
\begin{equation}\label{bb15}
L_\beta(v)<0\le L_\beta(u), \quad \forall x\in\Gamma_\sigma.
\end{equation}

The claim will follow by the maximum principle (see Theorem 3.1 in \cite{gt}). Indeed,  for a small $\epsilon_1$, it holds
%\begin{equation}\label{bb6}
%\epsilon_1 d^{1-\beta}(x)(-\log d(x))^{\eta}\le   u(x),\quad\forall x\in\partial\Gamma_\sigma%=\partial\Omega\cup(\Omega\cap\partial\Gamma_\sigma)
%\end{equation}
\begin{equation}\label{bb6}
\epsilon_1 v(x)\le   u(x),\quad\forall x\in\partial\Gamma_\sigma.%=\partial\Omega\cup(\Omega\cap\partial\Gamma_\sigma)
\end{equation}
This is obvious on $\partial\Om$
because both $u$ and $v$ vanish on $\partial\Om$ and, on the other hand, since $u$ is continuous in $\Om$ and strictly positive  on $\partial \Gamma_\sigma\cap\Om$,  we have that, possibly choosing a small $\epsilon_1>0$, %and $C>0$ 
%\begin{equation*}
%\epsilon_1\max\limits_{x\in \partial \Gamma_\sigma\cap\Om}v\le C\le \min\limits_{x\in  \partial \Gamma_\sigma\cap\Om}u.
%\end{equation*}
\begin{equation*}
\epsilon_1\max\limits_{x\in \partial \Gamma_\sigma\cap\Om}v\le \min\limits_{x\in  \partial \Gamma_\sigma\cap\Om}u.
\end{equation*}
Resuming,  from \eqref{bb15} and \eqref{bb6} we derive that 
\begin{equation*}
L_\beta(u-\epsilon_1v)\ge 0 {\ \rm on}\ \Gamma_\sigma,\  {\rm and}\ \ (u-\epsilon_1 v)\ge0 \  {\ \rm on}\ \  \partial\Gamma_\sigma.
\end{equation*}
%Observe that, since the operator $L_\beta$ is elliptic in $\Om$, by Theorem 3.1 in \cite{gt} 
%the (weak) maximum principle holds for $L_\beta$ in $\Gamma_\sigma$.
Consequently,  by the weak maximum principle
\eqref{u>v} holds,
and  the claim of  Step 2 is proved.

\bigskip

%{\bf Step 3}
%
%(interior estimate, $\eta<0$): NON SONO NECESSARIE!!!!!!! 
%
%For every  $\eta<0$ there exists  $\epsilon_2>0$ such that 
%
%\begin{equation}\label{u>v2}
%u(x)\ge \epsilon_2 d^{1-\beta}(x)\left(-\log d(x)\right)^\eta, \quad\forall x\in \Omega\setminus  \Gamma_\sigma.
%\end{equation}
{\bf Step 3: } If $\eta>0$ we have that here exists $\Gamma_\sigma$  and $\epsilon_2>0$ such that
\begin{equation}\label{u<v}
 \epsilon_2 u(x)\le d^{1-\beta}(x)\left(-\log d(x)\right)^\eta, \quad\forall x\in \overline \Gamma_\sigma.
\end{equation}

\bigskip
Again, from (\ref{Lv}) we have that
$$
L_\beta(v)=\frac{(1-\beta)\eta}{d\left(-\log d\right)^{1-\eta}}+o\left(\frac{1}{d\left(-\log d\right)^{1-\eta}}\right)\quad {\rm as } \ x\to \partial \Omega.
$$
Therefore, since $\eta>0$, for every $M>0$ there exists a neighbourhood $\Gamma_\sigma$ of $\partial \Omega$ such that
\begin{equation}\label{bb13}
L_\beta(v)>M \quad \forall x\in\Gamma_\sigma.
\end{equation}
Now, by the continuity of $f$ on $\overline \Omega$, we have that 
$$
0<\max_{x\in\overline\Omega}f(x)=\max_{x\in\overline\Omega}L_\beta(u)
 <+\infty
$$

We choose $\displaystyle M\ge \max_{x\in\overline\Omega}f(x) $,  so that 
\begin{equation}
L_\beta(v)\ge L_\beta(u).
\end{equation}

\medskip
With an argument  similar to the one used in the previous step, by reversing the roles of $u$ and $v$, we also have that there exists a constant {$\e_2>0$} such that 

\begin{equation}
\min_{\partial \Gamma_\sigma}v \ge \max_{\partial \Gamma_\sigma}\e_2 u,
\end{equation}
and the claim follows by the weak maximum principle.

\vspace{1cm}
{\bf Step 4: } Proof of \eqref{estimate2}.

\bigskip
It follows by Step $2$ and Step $3$ with $D_1=\e_1^{-1}$ and $D_2=\e_2^{-1}$.

\bigskip
{\bf Step 5: } Proof of \eqref{estimate3}. 
Here we have only to prove the RHS of \eqref{estimate3}. The convexity of $\Om$ will allow to choose $\eta=0$ in \eqref{Lv}
Indeed, if $\eta=0$ then \eqref{Lv} becomes
$$L_\beta(v)=-(1-\beta)\Delta d\ge0$$
since $\Omega$ is convex (see Lemma 14.17 in \cite{gt}). Next the claim of the RHS of \eqref{estimate3}  follows as in the proof of Step $3$.
\end{proof}

\begin{remark}\label{r1}
The previous proposition can be easily adapted  to more general weights. Indeed, the same statement holds if the distance function is replaced by a $C^2$-function, $w>0$ in a `neighborhood' of $\partial\Om$ and such that $w$ has no critical point on $\partial\Om$. This last claim implies that there exists a positive constant $C$ such that
$$\frac1C\le|\nabla w|^2\le C\quad\hbox{in a neighborhood of }\partial\Om.
$$
and this allows to repeat the proof without any change.
\end{remark}
%{\Mx
%\begin{remark}
%INSERIRE REMARK CHE SE $\beta<0$ ESISTE LA DERIVATA NORMALE E VALE 0? 
%\end{remark}
%}

\vspace{0.5cm}
\section{Proof of Theorems \ref{h} and \ref{t}}\label{s4}

\medskip
%Although Theorem \ref{L3} holds for every $d^\beta$ with  $\beta$ less than $1$, from now on we will assume that 
%$$\beta\in[0,1).$$

This section is devoted to derive additional  properties  for solutions to \eqref{b8}, based on the estimates established in Theorem \ref{L3}.

% We recall that in Theorem 2.4.8 in \cite{fks} it was proved that \eqref{b8} then $u\in C^{0,\alpha}(\overline\Om)$. 

\begin{proof}[Proof of Theorem \ref{h}] Note that the case $\beta\le 0 $ is trivial, so  we assume $\beta\in (0,1)$.
\smallskip
By contradiction suppose that $u\in C^{0,\alpha}(\overline\Om)$ with $\alpha>1-\beta$, i.e. there exists $C>0$ such that
\begin{equation}\label{b12}
|u(x)-u(y)|\le C|x-y|^\alpha\quad\forall x,y\in\overline\Om.
\end{equation}
In particular, let us choose $x\in \Gamma_\sigma$ and $y\in\partial\Om$ such that $|x-y|=d(x)$. Here $\Gamma_\sigma$ is the set where \eqref{estimate2} holds. Hence \eqref{b12} becomes, for $x\in \Gamma_\sigma$,
\begin{equation}
u(x)\le Cd^\alpha(x),
\end{equation}
and by the LHS of \eqref{estimate2}, we deduce that
\begin{equation*}
D_1d^{1-\beta}(x)(-\log d(x))^{-\eta_1}\le Cd^\alpha(x),\end{equation*}
which implies, since $\alpha>1-\beta$,
\begin{equation*}
 \frac C{D_1}\le d^{\alpha+\beta-1}(x)(-\log d(x))^{\eta_1}\to 0, \ {\rm as}\ x\to \partial\Omega,
\end{equation*}
 which leads to a contradiction.
\end{proof}
\begin{remark}\label{sharp}
The result of Theorem \ref{h} is sharp, as shown by the following two examples.
\end{remark}
\medskip 
Let $B_1$ denote the  unit ball  in $\R^N$, centered at the origin. 
For $\beta\in [0,1)$, we consider the  function 
$$u(x)=\int_{|x|}^1s^{1-N}(1-s)^{-\beta}\left(\int_0^st^{N-1}(1-t)^{\beta}dt\right)ds, \quad x\in B_1.$$
We have that $u$ is a positive radial solution to 
\begin{equation}
\begin{cases}
L_\beta(u)=d^\beta&in\ B_1\\
u=0&on\ \partial B_1,
\end{cases}
\end{equation}
%where $B_1$ is the  unit ball  in $\R^N$, centered at the origin. Since $d^\beta=(1-|x|)^\beta\in C^{0,\beta}(\overline{B_1})$ we get that $u\in C^2(B_1)$ but an easy computation shows that $u(x)\sim(1-|x|)^{1-\beta}$ near $|x|=1$. So the maximum regularity near the boundary of $u$ is $C^{0,1-\beta}$. 
 straightforward computation shows that shows that $u$ is H\"older continuous and satisfies $u(x)\sim(1-|x|)^{1-\beta}$ as  $|x|\to 1$. Therefore,  the optimal regularity of $u$ near the boundary  is $C^{0,1-\beta}$.

\medskip Alternatively,  for every $\beta<1$, we can consider
$$u(x)=\frac{\big(1+(1-\beta)|x|)}{N(1-\beta)(2-\beta)}(1-|x|)^{1-\beta}, \quad x\in B_1. $$
The function  $u$ is a positive radial solution to 
\begin{equation}
\begin{cases}
L_\beta(u)=1&in\ B_1\\
u=0&on\ \partial B_1,
\end{cases}
\end{equation}
Even in this case,   the maximum regularity of $u$ near the boundary  is $C^{0,1-\beta}$.

\medskip
Furthermore, the solution in this second example exhibits increasing regularity 
as $\beta \to -\infty$; in particular, $u \in C^k$ in a neighbourhood of the boundary if $\beta \le 1 - k$. This suggests a general phenomenon that merits further investigation.

%More is true, the solution of this second example becomes increasingly regular as $\beta \to -\infty$; in particular, $u \in C^k(\overline\Omega)$ if $\beta \le 1 - k$. This appears to be a general phenomenon that will be worth exploring.
\bigskip

\bigskip
A second intriguing question concerns the regularity of this solution in the context of  Sobolev spaces. 
We have previously established that  solutions to \eqref{b8} minimize the functional 
$$    F:W^{1,2}_0(\Omega,d^\beta)\to\R,\quad\quad F(u)=\int_\Om|\nabla u|^2d^\beta \ dx -\int_\Om fu \ dx.$$
Now we prove that $u\in W^{1,2}_0(\Omega) $ if and only if $\beta<\frac12$. 

\begin{proof}[Proof of Theorem \ref{t}] 
 The case $\beta\le0$ is trivial, so we consider only the case $\beta> 0$. %,  and we skip it. 

\medskip We first prove that the  condition $\beta<\frac12$ is necessary. 

\medskip
Indeed, if by contradiction  $u\in W^{1,2}_0(\Omega)$,  and $\beta\ge\frac12$,  by the Hardy inequality and \eqref{estimate2},  we get
\begin{equation*}
\int_\Omega|\nabla u|^2\ dx\ge C \int_\Omega\frac{u^2}{d^2} \ dx
\ge C \int_{\Gamma_\sigma} \frac{D_1^2}{d^{2\beta}\left(-\log d\right)^{2\eta_1}}\ dx=+\infty,  %\quad {\rm since  }\ \beta\ge \frac12,
\end{equation*}
by  using  \eqref{b11} in Section \ref{s2}.

\medskip
Now, let $\beta<\frac12$. Starting from the equation
\[
-{\rm div}(d^\beta \nabla u) = f,
\]
we multiply both sides by $\displaystyle \frac{u}{d^\beta}$ and integrate over the set
\[
\Omega_c := \{x \in \Omega : u(x) > c\},
\]
\begin{equation}\label{a5}
-\int_{\Om_{c}} {\rm div}(\nabla ud^\beta)\frac u{d^\beta}\ dx=\int_{\Om_{c}} \frac{fu}{d^\beta}\ dx.
\end{equation}

\smallskip

Although the classic divergence theorem is not directly applicable (since $\partial\Omega_c$ is not guaranteed to be regular), thanks to Theorems 3.2 and 3.3 in \cite{mpp}, we have that 
\begin{equation}\label{marta}
\int_{\Om_c} {\rm div}(u\nabla u)\ dx =\int_{\partial{\Om_{c}}}\frac{\partial u}{\partial\nu}uds(x),
\end{equation}
holds for almost every $c\in\R$. So we have
\begin{equation}\label{m1}
\begin{split}
-\int_{\Om_{c}} {\rm div}(\nabla ud^\beta)\frac u{d^\beta}\ dx&=\underbrace{-\int_{\partial{\Om_{c}}}\frac{\partial u}{\partial\nu}uds(x)}_{\ge0}+\int_{\Om_{c}}|\nabla u|^2\ dx-\beta\int_{\Om_{c}}(\nabla u\cdot\nabla d)\frac u{d}\ dx\\
%&\Big(\hbox{using that }-|\nabla u|\le\nabla u\cdot\nabla d\le|\nabla u|\Rightarrow -\beta\nabla u\cdot\nabla d\ge-|\beta||\nabla u|\Big)\\
&
\ge\int_{\Om_{c}}|\nabla u|^2\ dx-|\beta|\int_{\Om_{c}}|\nabla u|\frac ud \ dx\\
&\ge \int_{\Om_{c}}|\nabla u|^2\ dx-|\beta|\frac\epsilon2\int_{\Om_{c}}|\nabla u|^2\ dx-\frac{|\beta|}{2\epsilon}\int_{\Om_{c}}\frac{u^2}{d^2} \ dx,
\end{split}
\end{equation}
where we used that   $|\nabla d|=1$, $\frac{\partial u}{\partial\nu}\le0$ on $\partial\Om_{c}$ and  the inequality  $\displaystyle ab\le\frac\epsilon2 a^2+\frac1{2\epsilon} b^2$ with $a=|\nabla u | $ and $b=\frac ud$. 

\medskip Now,  by choosing
$\epsilon=\frac1{|\beta|}$ in the last inequality, and by \eqref{a5},  one obtains
\begin{equation}\label{m2}
\int_{\Om_{c}} \frac{fu}{d^\beta}\ dx=-\int_{\Om_{c}} {\rm div}(\nabla ud^\beta)\frac u{d^\beta}\ dx
\ge\frac12\int_{\Om_{c}}|\nabla u|^2 \ dx-\frac{\beta^2}2\int_{\Om_{c}}\frac{u^2}{d^2}\ dx.
\end{equation}

\medskip
Therefore, 
\begin{equation*}
\int_{\Om_{c}}|\nabla u|^2 \ dx\le {\beta^2}\int_{\Om_{c}}\frac{u^2}{d^2}\ dx +2\int_{\Om_{c}} \frac {fu}{d^\beta}\ dx\le {\beta^2}\int_\Om\frac{u^2}{d^2}\ dx +2\int_\Om\frac {fu}{d^\beta}\ dx
\end{equation*}
We only need to estimate  the first integral on the right side, since the second is bounded by \eqref{b11} and by the continuity of $u$ and $f$. % and the continuity of $u$.

\medskip
%Since $u \in C^N(\Omega)$, by Sard lemma  we have that the set of critical values of $u$ has zero Lebesgue measure. 
%Therefore, we can choose a sequence $c_n \to 0$ such that the condition $\nabla u \neq 0$ on $\partial \Omega_{c_n}$ is still true for all $n$.

In fact, by \eqref{estimate3}, for a fixed small $\sigma>0$ as in Theorem \ref{L3}, 
\begin{align*} \int_\Om\frac{u^2}{d^2}\ dx&=\int_{\Gamma_{\sigma}\cap\Om}\frac{u^2}{d^2}\ dx+\int_{\Om\setminus\Gamma_{\sigma}}\frac{u^2}{d^2}\ dx\\
&\le D \int_{\Gamma_{\sigma}\cap\Omega}\frac{\left(-\log d\right)^{2\eta}}{d^{2\beta}} \ dx+\frac{1}{\sigma^2}\int_{\Om\setminus\Gamma_{\sigma}} u^2 \ dx <+\infty, 
\end{align*}
since $\beta<\frac12$ using again  \eqref{b11} in Section \ref{s2}. \\
We now select  a  monotone decreasing sequence $c_n\to 0$  such that  \eqref{marta} holds; applying Beppo-Levi's Monotone Convergence Theorem we obtain
$$\int_\Om|\nabla u|^2    \ dx=\lim_{n\to +\infty}\int_{\Om_{c_n}}|\nabla u|^2 \ dx\le C.$$
This concludes the proof of the sufficient condition.
\end{proof}

It is worth noting that the threshold $\beta=\frac12$ is independent  of the regularity of the right-hand side  $f$. In particular, even if the datum $f$ in \eqref{b8} belongs to $C^\infty(\Om)$, the solution does not belong to $W^{1,2}_0(\Om)$ for $\beta\ge\frac12$. This result highlights that the regularity in Sobolev spaces 
%depends strongly on the operator
is primarily dictated by the properties of the operator 
 $L_\beta$ rather than by the smoothness of the forcing term $f$. 

\vspace{2cm}

\bibliography{CalanchiGrossi2.bib}

\begin{thebibliography}{10}

\bibitem{cav2}
A.~C. Cavalheiro.
\newblock A theorem on global regularity for solutions of degenerate elliptic
  equations.
\newblock {\em Commun. Math. Anal.}, 11(2):112--123, 2011.

\bibitem{dst}
R.~G. Dur\'an, M.~Sanmartino, and M.~Toschi.
\newblock Weighted a priori estimates for the {P}oisson equation.
\newblock {\em Indiana Univ. Math. J.}, 57(7):3463--3478, 2008.

\bibitem{eo}
D.~E. Edmunds and B.~Opic.
\newblock Weighted {P}oincar\'{e} and {F}riedrichs inequalities.
\newblock {\em J. London Math. Soc. (2)}, 47(1):79--96, 1993.

\bibitem{fjk1}
E.~Fabes, D.~Jerison, and C.~Kenig.
\newblock The {W}iener test for degenerate elliptic equations.
\newblock {\em Ann. Inst. Fourier (Grenoble)}, 32(3):vi, 151--182, 1982.

\bibitem{fkj2}
E.~B. Fabes, C.~E. Kenig, and D.~Jerison.
\newblock Boundary behavior of solutions to degenerate elliptic equations.
\newblock In {\em Conference on harmonic analysis in honor of {A}ntoni
  {Z}ygmund, {V}ol. {I}, {II} ({C}hicago, {I}ll., 1981)}, Wadsworth Math. Ser.,
  pages 577--589. Wadsworth, Belmont, CA, 1983.

\bibitem{fks}
E.~B. Fabes, C.~E. Kenig, and R.~P. Serapioni.
\newblock The local regularity of solutions of degenerate elliptic equations.
\newblock {\em Comm. Partial Differential Equations}, 7(1):77--116, 1982.

\bibitem{gt}
D.~Gilbarg and N.~S. Trudinger.
\newblock {\em Elliptic partial differential equations of second order}.
\newblock Classics in Mathematics. Springer-Verlag, Berlin, 2001.
\newblock Reprint of the 1998 edition.

\bibitem{ko}
A.~Kufner and B.~Opic.
\newblock How to define reasonably weighted {S}obolev spaces.
\newblock {\em Comment. Math. Univ. Carolin.}, 25(3):537--554, 1984.

\bibitem{mpp}
E.~Malagoli, D.~Pallara, and S.~Polidoro.
\newblock Mean value formulas for classical solutions to uniformly parabolic
  equations in the divergence form with non-smooth coefficients.
\newblock {\em Math. Nachr.}, 296(9):4236--4263, 2023.

\bibitem{Mk}
B.~Muckenhoupt.
\newblock Weighted norm inequalities for the {H}ardy maximal function.
\newblock {\em Trans. Amer. Math. Soc.}, 165:207--226, 1972.

\bibitem{sl}
L.~Simon.
\newblock {\em Theorems on regularity and singularity of energy minimizing
  maps}.
\newblock Lectures in Mathematics ETH Z\"urich. Birkh\"auser Verlag, Basel,
  1996.
\newblock Based on lecture notes by Norbert Hungerb\"uhler.

\bibitem{stv1}
Y.~Sire, S.~Terracini, and S.~Vita.
\newblock Liouville type theorems and regularity of solutions to degenerate or
  singular problems part {I}: even solutions.
\newblock {\em Comm. Partial Differential Equations}, 46(2):310--361, 2021.

\bibitem{stv2}
Y.~Sire, S.~Terracini, and S.~Vita.
\newblock Liouville type theorems and regularity of solutions to degenerate or
  singular problems part {II}: odd solutions.
\newblock {\em Math. Eng.}, 3(1):Paper No. 5, 50, 2021.

\end{thebibliography}
\bibliographystyle{abbrv}

\end{document}